\theoremstyle{plain}
\newtheorem{theorem}{Theorem}
\newtheorem{lemma}[theorem]{Lemma}
\theoremstyle{definition}
\theoremstyle{remark}
\newtheorem{remark}[theorem]{Remark}
\newcommand{\Z}{\mathbb{Z}}
\newcommand{\R}{\mathbb{R}}
\newcommand{\half}{\frac{1}{2}}
\newcommand{\wo}{\backslash}
\newcommand{\ben}{\begin{enumerate}}
\newcommand{\een}{\end{enumerate}}
\newcommand{\bit}{\begin{itemize}}
\newcommand{\eit}{\end{itemize}}
\def\bal#1\eal{\begin{align*}#1\end{align*}}
\DeclareMathSymbol{\mlq}{\mathord}{operators}{``}
\DeclareMathSymbol{\mrq}{\mathord}{operators}{`'}
\def\XXint#1#2#3{{\setbox0=\hbox{$#1{#2#3}{\int}$ }
\vcenter{\hbox{$#2#3$ }}\kern-.6\wd0}}
\title{Centered Hardy--Littlewood maximal operator on the real line: lower bounds}
\author{Paata Ivanisvili, Princeton University
\and Samuel Zbarsky, Princeton University}
\begin{document}
\maketitle
\begin{abstract}
For $1<p<\infty$ and $M$ the centered Hardy-Littlewood maximal operator on $\R$, we consider whether there is some $\varepsilon=\varepsilon(p)>0$  such that $||Mf||_p\ge (1+\varepsilon)||f||_p$. We prove this for $1<p<2$. For $2\le p<\infty$, we prove the  inequality for indicator functions and for unimodal functions.
\end{abstract}

\selectlanguage{french} 
\begin{abstract}
Soient $1<p<\infty$ et $M$ la fonction maximale de Hardy-Littlewood sur $\R$. Nous \'etudions l'existence d'un $\varepsilon=\varepsilon(p)>0$ tel que $||Mf||_p\ge (1+\varepsilon)||f||_p$. Nous l'\'etablissons pour $1<p<2$. Pour $2\le p<\infty$, nous prouvons l'in\'egalit\'e pour les fonctions indicatrices  et les fonctions unimodales.
\end{abstract}
\selectlanguage{english} 
\section{Introduction}
Given a locally integrable real-valued function  $f$ on $\mathbb{R}^{n}$ define its uncentered maximal function  $M_{u}f(x)$  as follows 
\begin{align}\label{maxu}
M_{u}f(x) = \sup_{B \ni x} \frac{1}{|B|} \int_{B}|f(y)|dy,
\end{align}
where the supremum is taken over all balls $B$ in $\mathbb{R}^{n}$ containing the point $x$, and  $|B|$ denotes the Lebesgue volume of $B$. 
In studying {\em lower operator norms} of the maximal function \cite{Lerner2010}  A.~Lerner raised the following question:  given $1<p<\infty$  can one find a constant  $\varepsilon=\varepsilon(p)>0$ such that 
\begin{align}\label{lerq}
\|M_{u}f\|_{L^{p}(\mathbb{R}^{n})} \geq (1+\varepsilon) \|f\|_{L^{p}(\mathbb{R}^{n})} \quad \text{for all} \quad f \in L^{p}(\mathbb{R}^{n}).
\end{align}
The affirmative answer was obtained in \cite{IvanisviliJayeNazarov2017}, i.e., the Lerner's inequality (\ref{lerq}) holds for all $1<p<\infty$ and for any $n\geq 1$. The paper also studied the estimate (\ref{lerq}) for other maximal functions. For example, the lower bound  (\ref{lerq}) persists if one takes supremum in (\ref{maxu}) over the shifts and dilates of a fixed  centrally symmetric convex body $K$. Similar positive results have been obtained for dyadic maximal functions \cite{Melas2017}; maximal functions defined over {\em $\lambda$-dense family} of sets, and {\em almost centered} maximal functions (see \cite{IvanisviliJayeNazarov2017} for details). 

The Lerner's inequality for the centered maximal function 
\begin{align}\label{lera}
\|Mf\|_{L^{p}(\mathbb{R}^{n})} \geq (1+\varepsilon(p,n))\|f\|_{L^{p}(\mathbb{R}^{n})}, \quad f \in L^{p}(\mathbb{R}^{n}), \qquad Mf(x) = \sup_{r>0}\frac{1}{|B_{r}(x)|} \int_{B_{r}(x)} |f|,
\end{align}
where the supremum is taken over all balls centered at $x$,  is an open question, and  the full characterization of the pairs  $(p,n)$, $n\geq 1$, and $ 1<p<\infty$, for which (\ref{lera}) holds with some $\varepsilon(p,n)>0$ and for all $f \in L^{p}(\mathbb{R}^{n})$ seems to be unknown.  If $n \geq 3$, and  $p>\frac{n}{n-2}$ then  one can show that $f(x) = \min\{|x|^{n-2}, 1\} \in L^{p}(\mathbb{R}^{n})$, and $Mf(x)=f(x)$, as $f$ is the pointwise minimum of two superharmonic functions. This gives a counterexample to (\ref{lera}). In fact, Korry \cite{Korry2001} proved that the centered maximal operator does not have fixed points unless $n \geq 3$ and  $p>\frac{n}{n-2}$, but a lack of fixed points does not imply that \eqref{lera} holds. On the other hand for any $n\geq 1$,  by comparing $Mf(x) \geq C(n) M_{u}f(x)$, and using the fact that $\| M_{u}f\|_{L^{p}(\mathbb{R}^{n})} \geq (1+\frac{B(n)}{p-1})^{1/p}\|f\|_{L^{p}(\mathbb{R}^{n})}$ (see \cite{IvanisviliJayeNazarov2017}), one can easily conclude that (\ref{lera}) holds true whenever $p$ is sufficiently close to $1$. It is natural to ask what is the maximal $p_{0}(n)$ for which  if $1<p<p_{0}(n)$ then (\ref{lera}) holds. 
\subsection{New results}
In this paper we study the case of dimension $n=1$ and the centered Hardy--Littlewood maximal operator $M$. We obtain
\begin{theorem}\label{cth1}
If $1<p<2$  and $n=1$ then Lerner's inequality (\ref{lera}) holds true, namely
\[
\|Mf\|_{p}\geq \left(\frac{p}{2(p-1)}\right)^{1/p}\|f\|_{p}.
\]
\end{theorem}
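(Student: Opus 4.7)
The plan is to reduce the claim to the distributional inequality
$$|\{Mf>\lambda\}|\ \ge\ \frac{1}{2\lambda}\int_{\{f>\lambda\}}f\,dx\qquad(\star)$$
for nonnegative $f$, and then recover the $L^{p}$ bound by integrating $(\star)$ against $p\lambda^{p-1}\,d\lambda$.

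First I would make the routine reductions. Replacing $f$ by $|f|$ I may assume $f\ge 0$; the theorem is trivial if $\|f\|_{p}=\infty$, since $f\le Mf$ a.e.\ by Lebesgue differentiation (applicable because $L^{p}\subset L^{1}_{\mathrm{loc}}$ for $p>1$). Since $Mf$ is lower semi-continuous, the superlevel set $E_{\lambda}:=\{Mf>\lambda\}$ is open and hence a countable disjoint union of maximal open intervals. On each \emph{bounded} component $I=(a,b)$, the endpoints lie outside $E_{\lambda}$, so $Mf(a),Mf(b)\le \lambda$. Specializing the definition of $Mf(a)$ to radius $r=(b-a)/2$ gives
$$\int_{a}^{(a+b)/2}f\,dx\ \le\ \int_{a-r}^{a+r}f\,dx\ \le\ 2\lambda r\ =\ \lambda|I|,$$
and symmetrically $\int_{(a+b)/2}^{b}f\,dx\le\lambda|I|$. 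Adding yields $\int_{I}f\,dx\le 2\lambda|I|$; unbounded components satisfy this trivially. Summing over components and using the inclusion $\{f>\lambda\}\subset E_{\lambda}$ produces $\int_{\{f>\lambda\}}f\,dx\le 2\lambda|E_{\lambda}|$, which is $(\star)$.

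Given $(\star)$, the layer-cake formula together with Fubini completes the proof:
\begin{align*}
\|Mf\|_{p}^{p}&=p\int_{0}^{\infty}\lambda^{p-1}|E_{\lambda}|\,d\lambda\ \ge\ \frac{p}{2}\int_{0}^{\infty}\lambda^{p-2}\!\int_{\R}f(x)\mathbf{1}_{\{f(x)>\lambda\}}\,dx\,d\lambda\\
&=\frac{p}{2}\int_{\R}f(x)\!\int_{0}^{f(x)}\!\lambda^{p-2}\,d\lambda\,dx\ =\ \frac{p}{2(p-1)}\|f\|_{p}^{p},
\end{align*}
where $p>1$ is what makes the inner integral converge. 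Note that $(\star)$ and this computation are valid for every $p>1$; the hypothesis $p<2$ enters only to make the constant $\frac{p}{2(p-1)}$ strictly larger than $1$, so that the statement is nontrivial.

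I expect the heart of the argument to be the boundary step in the proof of $(\star)$: the two intervals of radius $(b-a)/2$ centered at $a$ and at $b$ cover exactly complementary halves of $I=(a,b)$, and this geometric coincidence is what converts the pointwise data $Mf(a),Mf(b)\le\lambda$ into the integrated bound $\int_{I}f\le 2\lambda|I|$. The factor of $2$ produced here is the same factor that confines the method to the range $p<2$; breaking past the threshold $p=2$ would require a genuinely different mechanism than covering by two half-intervals.
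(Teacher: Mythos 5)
Your proof is correct, and the way you establish the key distributional inequality is genuinely different from the paper's. Both arguments reduce the theorem to an estimate of the form $|\{Mf>\lambda\}|\ge\frac{1}{2\lambda}\int_{\{f>\lambda\}}f$ followed by the same layer-cake computation, and both are stuck at $p=2$ for the same reason (the factor $2$ in the denominator). But the paper proves its version, $|\{Mf\ge\lambda\}|\ge\frac{1}{2\lambda}\int_{\{f\ge\lambda\}}f$, by a modification of the Riesz sunrise lemma: it introduces the auxiliary function $\varphi(x)=\sup_{y<x}\bigl[\int_y^xf-2\lambda(x-y)\bigr]$, shows $\{f>2\lambda\}\subseteq\{\varphi>0\}\subseteq\{Mf\ge\lambda\}$, and splits $\{f\ge\lambda\}$ according to whether $\varphi>0$; this requires $f$ continuous and compactly supported, so the theorem for general $f\in L^p$ is then recovered by density and the Lipschitz continuity of $M$ on $L^p$. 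You instead exploit that $\{Mf>\lambda\}$ is open, decompose it into component intervals, and on each bounded component $(a,b)$ use the \emph{centered} averages at the two endpoints with radius $r=(b-a)/2$ — the two balls $[a-r,a+r]$ and $[b-r,b+r]$ covering the two halves of $(a,b)$ — to get $\int_a^bf\le 2\lambda(b-a)$. This yields the formally stronger bound $|\{Mf>\lambda\}|\ge\frac{1}{2\lambda}\int_{\{Mf>\lambda\}}f$, works directly for arbitrary nonnegative $f\in L^p$ (only lower semicontinuity of $Mf$ and $Mf\ge f$ a.e.\ are needed), and dispenses with the approximation step entirely. The trade-off is essentially aesthetic: the paper's route stays close to Lerner's sunrise-lemma framework, while yours is shorter and self-contained; the constants obtained are identical.
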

\begin{theorem}\label{cth2}
For $n=1$, and  any $p, 1<p<\infty$, inequality (\ref{lera}) holds true a) for the class of indicator functions with $\epsilon(p,n)=1/4^p$, and b) for the class of unimodal functions, with $\epsilon(p,n)$ not explicitly given.
\end{theorem}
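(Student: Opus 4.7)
For part (a), I would reduce, by approximation, to the case $E = \bigsqcup_{j=1}^N I_j$ a finite disjoint union of open intervals $I_j = (a_j, b_j)$ of lengths $\ell_j$. The key pointwise bound is that for every $x \in \R$ and every $j$, the ball of radius $d(x, I_j) + \ell_j$ centered at $x$ contains $I_j$, giving
\[
 Mf(x) \;\ge\; \frac{\ell_j}{2\bigl(d(x, I_j) + \ell_j\bigr)}.
\]
In particular $Mf \ge 1/4$ on the two-sided ``shadow'' $\{d(\cdot,I_j) \le \ell_j\} \setminus I_j$ of length $2\ell_j$, and $Mf$ is substantially larger close to $I_j$. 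My plan is to integrate this estimate after partitioning $\R$ into Voronoi-style cells, assigning each $x$ to the index $j$ minimizing $(d(x,I_j)+\ell_j)/\ell_j$, and to show that the resulting sum recovers the single-interval extremum
\[
 \|M f\|_p^p \;\ge\; \Bigl(1 + \tfrac{2^{1-p}}{p-1}\Bigr) |E|,
\]
matching the explicit calculation for $f = \mathbf{1}_{[0,1]}$. The target $\|Mf\|_p \ge (1+4^{-p})\|f\|_p$ then reduces to the numerical claim $(1 + 2^{1-p}/(p-1))^{1/p} \ge 1 + 4^{-p}$ for all $p > 1$, which via $(1+y)^p - 1 \le 2py$ on $y \in [0,1]$ (from $(1+y)^p \le e^{py}$ and $e^z - 1 \le 2z$ on $[0,1]$) becomes the elementary $2^p \ge p(p-1)$.

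The main technical obstacle in part (a) is that for closely packed collections of intervals the Voronoi cell of a single $I_j$ is smaller than $\ell_j$, so the individual bound does not saturate $\ell_j \cdot 2^{1-p}/(p-1)$ on its own cell. To recover the missing mass one must also use the stronger inequality $Mf(x) \ge |E \cap B_r(x)|/(2r)$ for radii $r$ that capture whole clusters of neighboring intervals, and take the maximum of the individual and cluster estimates at each $x$. A stopping-time style grouping of intervals should enforce that the combined contribution matches the single-interval bound.

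For part (b), the absence of an explicit $\varepsilon(p,n)$ strongly suggests a compactness/contradiction argument. Suppose the inequality fails for some $p > 1$: one obtains unimodal $f_n$ with $\|f_n\|_p = 1$ and $\|Mf_n\|_p \to 1$. Translating the peak of each $f_n$ to the origin, the unimodal constraint combined with the $L^p$ normalization rules out both $f_n(0) \to \infty$ (which via the $1/|x|$ tail of $Mf_n$ would force $\|Mf_n\|_p \to \infty$) and $f_n(0) \to 0$ (which would force $\|Mf_n\|_p \to 0$); so along a subsequence one extracts a unimodal limit $f$ of unit $L^p$ norm converging a.e. Fatou applied to $(Mf_n)$ gives $\|Mf\|_p \le 1$, which together with the pointwise inequality $Mf \ge f$ forces $Mf = f$ a.e. This makes $f$ a nontrivial fixed point of the centered maximal operator in dimension one, contradicting Korry's theorem \cite{Korry2001}. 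The delicate part is verifying nontriviality of the extracted limit, for which the essentially one-parameter nature of unimodal profiles (once height and scale are fixed) is crucial.
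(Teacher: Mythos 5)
Both halves of your proposal are plans with the hard step left open, and in each case the open step is the actual content of the theorem.

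For part (a), the single-interval computation and the closing numerical inequality are fine, but the claim that a Voronoi partition plus a ``stopping-time style grouping'' recovers the sharp constant $1+2^{1-p}/(p-1)$ for an arbitrary finite union of intervals is precisely what you do not prove, and it is not routine. For a tight cluster ($N$ intervals of length $\epsilon$ separated by gaps of length $\delta\epsilon$) the individual shadows are almost entirely swallowed by $E$ and by one another, so you must pass to the cluster scale, where the density of $E$ in the covering ball is strictly below $1$; one can check in the equally spaced example that the gap contributions and the far-field cluster contribution sum to \emph{exactly} the single-interval constant, so your target inequality is essentially sharp and leaves no room for a lossy bookkeeping argument. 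The paper sidesteps all of this: its Lemma~\ref{lem:pseudosunrise}, applied at level $\lambda=1/4$, gives $|\{M\mathbbm{1}_E\ge 1/4\}|\ge 2|E|$ in one stroke (the supremum over left endpoints in the auxiliary function $\varphi$ performs the multi-scale grouping you are trying to set up by hand), and settles for the weaker but immediate constant $1+4^{-p}$ at the level of $\|\cdot\|_p^p$.

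For part (b), the compactness argument fails at exactly the point you flag, and the fix is not cosmetic. Unimodality, $\|f_n\|_p=1$, and control of $f_n(0)$ do not prevent the $L^p$ mass from spreading over arbitrarily many scales: for instance $f_n=c_n|x|^{-1/p}\mathbbm{1}_{[a_n,b_n]}$ with $b_n/a_n\to\infty$ is unimodal with unit norm, and \emph{every} rescaling $af_n(b\,\cdot)$ with bounded $L^p$ norm tends to $0$ a.e.; so extracting a nontrivial limit requires using the minimizing property itself to exclude such degeneration, which is the whole difficulty (a unimodal function is two arbitrary monotone functions, not ``essentially one-parameter''). There is also a smaller logical gap: Fatou gives $\|Mf\|_p\le 1$ and $\|f\|_p\le 1$ separately, which does not force $Mf=f$; you would need $\int((Mf_n)^p-f_n^p)\to 0$ together with a.e.\ convergence of $f_n$ to a nontrivial limit, and only then invoke \cite{Korry2001}. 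The paper avoids compactness and Korry entirely by a uniform quantitative argument: a fixed number $n(p)$ of iterations of $M$ at least doubles the $L^p$ norm of any unimodal function (via the dyadic step minorant $\psi$ and the sub-solution $\bar g(x)=(1-\sqrt{x})\mathbbm{1}_{(0,1]}$ satisfying $M\bar g(x)\ge\bar g(8x/9)$), and the Lipschitz continuity of $M$ on $L^p$ then forces $\|Mf-f\|_p\ge\tilde\epsilon\|f\|_p$ directly, whence $\|Mf\|_p^p\ge\|f\|_p^p+\|Mf-f\|_p^p$. If you want to salvage your route, you would need a profile decomposition adapted to unimodal functions; as written, part (b) is not a proof.
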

\section{Proof of the main results}
\subsection{Proof of Theorem~\ref{cth1}}\label{sec:smallp}
First we prove the following modification of the classical Riesz's  sunrise lemma (see Lemma~1 in \cite{Grecia}). Our proof is similar to the proof of the lemma.
\begin{lemma}\label{lem:pseudosunrise}
For a nonnegative continuous compactly supported  $f$ and any $\lambda>0$, we have
\[
|\{Mf\ge\lambda\}|\ge \frac{1}{2\lambda}\int_{\{f\ge\lambda\}} f.
\]
\end{lemma}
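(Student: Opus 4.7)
The plan is to adapt the classical Riesz sunrise proof to the centered maximal operator by decomposing the superlevel set $\{Mf>\lambda\}$ into its maximal open intervals and bounding each component via the boundary condition at its right endpoint.

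First I would record two preliminaries. Since $f$ is continuous, averaging over small balls shows $Mf(x)\ge f(x)$ for every $x$, so $\{f>\lambda\}\subseteq\{Mf>\lambda\}$. Moreover, $Mf$ is lower semicontinuous (so $\{Mf>\lambda\}$ is open) and $f$ is compactly supported (so $Mf(x)\le \|f\|_{1}/(2|x|)$ for $|x|$ large, making the set bounded). Hence $\{Mf>\lambda\}=\bigsqcup_{i}(\alpha_{i},\beta_{i})$ is a countable disjoint union of bounded open intervals, and each endpoint $\beta_{i}$ satisfies $Mf(\beta_{i})\le\lambda$, i.e.
\[
\int_{B_{r}(\beta_{i})}f\;\le\;2r\lambda \quad\text{for every } r>0.
\]

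The core step is the componentwise bound
\[
\beta_{i}-\alpha_{i}\;\ge\;\frac{1}{2\lambda}\int_{(\alpha_{i},\beta_{i})\cap\{f>\lambda\}}f.
\]
Set $F_{i}:=(\alpha_{i},\beta_{i})\cap\{f>\lambda\}$. If $F_{i}=\emptyset$ the bound is trivial; otherwise let $c_{i}:=\inf F_{i}$, which satisfies $\alpha_{i}\le c_{i}<\beta_{i}$. The clever choice of radius is $r_{i}:=\beta_{i}-c_{i}$, for which $B_{r_{i}}(\beta_{i})=[c_{i},2\beta_{i}-c_{i}]\supseteq F_{i}$. Applying the boundary condition at $\beta_{i}$ gives
\[
\int_{F_{i}} f\;\le\;\int_{B_{r_{i}}(\beta_{i})} f\;\le\;2\lambda r_{i}\;=\;2\lambda(\beta_{i}-c_{i})\;\le\;2\lambda(\beta_{i}-\alpha_{i}).
\]
Summing over $i$ and using $\{f>\lambda\}\subseteq\{Mf>\lambda\}$ yields
\[
|\{Mf>\lambda\}|\;\ge\;\frac{1}{2\lambda}\int_{\{f>\lambda\}}f.
\]
To replace the strict superlevel sets by the closed ones in the statement, I would apply this bound with $\lambda-\varepsilon$ in place of $\lambda$ and send $\varepsilon\to 0^{+}$, using monotone (downward) convergence of the level sets on both sides.

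The only step I expect to require a bit of care is justifying the boundary inequality $Mf(\beta_{i})\le\lambda$ at a finite right endpoint, which rests on $\beta_{i}<\infty$ (guaranteed by the compact support of $f$) and on the lower semicontinuity of $Mf$ (so $\beta_{i}\notin\{Mf>\lambda\}$). Everything else is the sunrise-style decomposition plus the specific choice $r_{i}=\beta_{i}-c_{i}$, namely the smallest ball centered at the right endpoint that still captures the entire portion of the component where $f$ exceeds $\lambda$.
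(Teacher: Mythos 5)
Your proof is correct, but it takes a genuinely different route from the paper's. The paper never touches the components of $\{Mf>\lambda\}$ directly: it introduces the one-sided sunrise function $\varphi(x)=\sup_{y<x}\bigl(\int_y^x f-2\lambda(x-y)\bigr)$, shows $\{f>2\lambda\}\subseteq\{\varphi>0\}\subseteq\{Mf\ge\lambda\}$ (a left average above $2\lambda$ forces the symmetric average above $\lambda$), applies the classical Riesz estimate $|\{\varphi>0\}|\ge\frac{1}{2\lambda}\int_{\{\varphi>0\}}f$, and then patches in the leftover region $\{\lambda\le f\le 2\lambda\}\setminus\{\varphi>0\}$, where the pointwise bound $f\le 2\lambda$ gives the measure estimate for free. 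You instead decompose $\{Mf>\lambda\}$ itself into components $(\alpha_i,\beta_i)$ and exploit the centered boundary condition $Mf(\beta_i)\le\lambda$ with the radius $r_i=\beta_i-c_i$; the loss of the factor $2$ appears because the ball $B_{r_i}(\beta_i)$ has length $2r_i$ while only half of it is needed to cover $F_i$, whereas in the paper the same factor enters through doubling the level to $2\lambda$ when converting one-sided averages to centered ones. Your argument is more self-contained (no auxiliary function, no case split at $f=2\lambda$) and in fact yields the marginally stronger bound $|\{Mf>\lambda\}|\ge\frac{1}{2\lambda}\int_{\{Mf>\lambda\}}f$ if one takes $r_i=\beta_i-\alpha_i$; the paper's route has the advantage of reusing the classical sunrise machinery essentially verbatim and of connecting visibly to Lerner's one-sided argument. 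All the supporting points you flag --- lower semicontinuity of $Mf$, boundedness of $\{Mf>\lambda\}$ from compact support, $Mf\ge f$ by continuity, and the passage from strict to non-strict level sets via $\lambda-\varepsilon$ and downward continuity of measure --- are the right ones and go through.
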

\begin{proof}
Define an auxiliary function $\varphi(x)$ via
\[
\varphi(x)=\sup_{y<x}\int_y^x f(t)dt-2\lambda(x-y).
\]

Notice that if $f(x)>2\lambda$ then  $\varphi(x)>0$. Indeed, 
\begin{align}\label{adef}
\varphi(x) = \sup_{y<x }\;  (x-y) \left[ \frac{1}{x-y}\int_{y}^{x}f  - 2\lambda \right] >0,
\end{align}
because we can choose $y$ sufficiently close to $x$, and use the fact that $\lim_{y \to x} \frac{1}{x-y}\int_{y}^{x}f = f(x)$. 
On the other hand if $\varphi(x)>0$, then $Mf(x)>\lambda$. Indeed,  it follows from (\ref{adef}) that $\sup_{y<x} \frac{1}{x-y}\int_{y}^{x} f >2\lambda$. Therefore 
\begin{align*}
Mf(x) = \sup_{r>0} \frac{1}{2r} \int_{x-r}^{x+r}f  \geq \frac{1}{2(x-y)}\int_{y}^{x} f \geq \lambda.
\end{align*}

Thus, we obtain 
\begin{align}
&\{Mf\ge\lambda\}\supseteq\{f\ge\lambda\}\cup\{\varphi>0\}; \label{supset}\\
&\{f>2\lambda\}\subseteq\{\varphi>0\}. \label{subset}
\end{align}
Therefore, it follows that 
\bal
|\{Mf\ge\lambda\}|&\ge |\{\varphi>0\}|+|\{\lambda\le f\le 2\lambda\}\wo\{\varphi>0\}|\\
&\ge \frac{1}{2\lambda}\int_{\{\varphi>0\}} f+\frac{1}{2\lambda}\int_{\{\lambda\le f\le 2\lambda\}\wo\{\varphi>0\}} f\\
&\ge \frac{1}{2\lambda}\int_{\{f\ge\lambda\}} f.
\eal
\end{proof}
Now we are ready to  prove Theorem~\ref{cth1}. 
\begin{proof}[Proof of Theorem~\ref{cth1}] Take any continuous bounded compactly supported $f\geq 0$. By Lemma~\ref{lem:pseudosunrise}, for any $\lambda>0$  we have 
\begin{align}\label{lb1}
|\{Mf\geq \lambda\}| \geq \frac{1}{2\lambda} \int_{\mathbb{R}} f(x) \mathbbm{1}_{[\lambda, \infty)}(f(x)) dx.
\end{align}
Finally we multiply both sides of (\ref{lb1}) by $p\lambda^{p-1}$, and we integrate the obtained inequality in $\lambda$ on $(0, \infty)$, so we obtain 
\begin{align*}
\int_{\mathbb{R}} (Mf)^{p} \geq \int_{0}^{\infty}\int_{\mathbb{R}} \frac{p \lambda^{p-2}}{2} f(x) \mathbbm{1}_{[\lambda, \infty)}(f(x)) dx d\lambda  = \frac{p}{2(p-1)}\int_{\mathbb{R}} f^{p},
\end{align*}
and $\frac{p}{2p-2}>1$ precisely when $p<2$. This finishes the proof of Theorem~\ref{cth1} for continuous compactly supported bounded nonnegative $f$. To obtain the inequality $\|Mf\|_{p}\geq (\frac{p}{2(p-1)})^{1/p}\|f\|_{p}$ for an arbitrary nonnegative $f \in L^{p}(\mathbb{R})$ we can approximate $f$ in $L^p$ by a sequence of compactly supported smooth functions $f_{n}$, and use the fact that the operator $M$ is Lipschitz on $L^p$ (since it is bounded and subadditive).
\end{proof}
\begin{remark}
The argument presented above is a certain modification of the classical Riesz's  sunrise lemma, and an adaptation of an argument of Lerner (see Section 4 in ~\cite{Lerner2010}).  For $p$ less than about 1.53, it is possible to use Lerner's result directly, together with the fact that $Mf\ge (M_uf)/2$. We need the modified sunrise lemma to get the result for all $p<2$.
\end{remark}

\subsection{Proof of Theorem~\ref{cth2}}\label{sec:indicator} 
\subsubsection{Indicator functions}
\begin{proof}[Proof of Theorem~\ref{cth2} for indicator functions $\mathbbm{1}_{E}$] Let $\mathbbm{1}_{E} \in L^{p}(\mathbb{R})$ and let $\hat\delta>0$. We approximate $\mathbbm{1}_{E}$ arbitrarily well in $L^p$ by a nonnegative continuous compactly supported function $f$. Then $f$ approximates $\mathbbm{1}_{E}$ and $Mf$ also approximates $M\mathbbm{1}_E$ to within some $\delta\ll\hat\delta$ in $L^p$.

For a.\ e. $x\in E$, we have $M\mathbbm{1}_E(x)\ge 1$. Additionally, by Lemma~\ref{lem:pseudosunrise}, we have that
\[
|\{Mf\ge 1/4\}|\ge 2\int_{\{f\ge\frac{1}{4}\}} f\ge 2\int_{\{f\ge\frac{1}{4}\}\cap E}\mathbbm{1}_{E} -2\int_{E} |\mathbbm{1}_{E}-f|.
\]
By making $\delta$ is small, we can ensure that $\{|f-\mathbbm{1}_{E}|\ge 3/4\}$ is small, so
\[
2\int_{\{f\ge\frac{1}{4}\}\cap E}\mathbbm{1}_{E}\ge 2|E|-\hat\delta/2.
\]
Also, by Holder's inequality, we can bound $\int_{E} |\mathbbm{1}_{E}-f|$ in terms of $||\mathbbm{1}_{E}-f||_p<\delta$. Thus, when $\delta$ is sufficiently small, we get
\[
|\{Mf\ge 1/4\}|\ge  2|E|-\hat\delta,
\]
so there is a set of measure at least $|E|-\hat\delta$ on which $\mathbbm{1}_E=0$ and  $Mf \ge 1/4$. If $\delta$ is sufficiently small, we have that $|\{Mf-M\mathbbm{1}_E\ge \hat\delta\}|<\hat\delta$, so there is a set of measure $|E|-2\hat\delta$ on which $\mathbbm{1}_E=0$ and  $M\mathbbm{1}_E \ge 1/4-\hat\delta$. Taking $\hat\delta\to 0$, we get
\[
\|M\mathbbm{1}_{E}\|_p^p \ge (1+1/4^p) \|\mathbbm{1}_{E}\|_p^p.
\]
\end{proof}
\subsubsection{Unimodal functions}\label{sec:unimodal}

Next we obtain lower bounds on $L^{p}$ norms of the  maximal operator over the class of unimodal functions. By unimodal function $f \in L^{p}(\mathbb{R})$, $f\geq 0$,  we mean any function which is  increasing until some point $x_{0}$ and then decreasing. Without loss of generality we will assume that $x_{0}=0$. 
\begin{proof}[Proof of Theorem~\ref{cth2} for unimodal functions]
We can assume that  $\|f\mathbbm{1}_{\R^+}\|_p^p\ge\half\|f\|_p^p$.

Let $\tilde f=f\mathbbm{1}_{\R^+}$. We define $M^{n} = \underbrace{M\circ \cdots \circ M}_{n}$ to be the $n$-th iterate of $M$. We will find an $n$, independent of $f$, such that $\|M^n \tilde f\|_p^p>2^{p+1}\|\tilde f\|_p^p$, independent of the function $f$.
First, for $x>0$, let 
\[
a(x)=\min_{k\in\Z,2^k>x}  2^k.
\]
Then let
\[
\psi(x)=\tilde f(a(x)),
\]
that is $\psi\le \tilde f$, and $\psi$ is a step function approximation from below. Then

\[
2 \|\psi\|_p^p = 2 \sum_{k\in \Z} 2^k \tilde f(2^{k+1})^p = \sum_{s\in \Z} 2^s \tilde f(2^s)^p \geq \|\tilde{f}\|_p^p
\]
Now let
\[
\bar g(x)=(1-\sqrt{x})\mathbbm{1}_{(0,1]}(x).
\]
Then for $0< x\le 9/8$, we have that 
\[
M\bar g(x)\ge\frac{1}{2x}\int_0^{2x}\bar g(y)dy\ge\frac{1}{2x}\int_0^{2x}1-\sqrt{y}dy=1-\frac{2}{3}\sqrt{2x}=\bar g(8x/9),
\]
and for all $x\notin (0,9/8]$, we have $M\bar g(x)\ge 0=\bar g(8x/9)$. Thus
\[
M^n\bar g(x)\ge \bar g\left((8/9)^nx\right),
\]
so
\begin{equation}\label{eq:Mn1}
\int_{\half(9/8)^n}^{(9/8)^n} (M^n\mathbbm{1}_{(0,1]})^p\ge \int_{\half(9/8)^n}^{(9/8)^n} (M^ng)^p\ge (9/8)^n \int_{\half}^{1} \bar g^p=C_p(9/8)^n.
\end{equation}
Note that for all $k\in\Z$, we have $\psi\ge \tilde f(2^{k+1}) \mathbbm{1}_{(2^k,2^{k+1}]}$. Thus
\[
M^n\psi(x)\ge  \tilde f(2^{k+1}) M^n\mathbbm{1}_{(2^k,2^{k+1}]}(x).
\]
We will use this lower bound for varying values of $k$ for different $x$. We use \eqref{eq:Mn1} in the third inequality below, since $\mathbbm{1}_{(2^k,2^{k+1}]}$ is just a horizontal rescaling and translation of $\mathbbm{1}_{(0,1]}$. We have
\bal
\|M^n\psi\|_p^p&\ge \sum_{-\infty}^\infty \int_{2^k+(9/8)^n2^{k-1}}^{2^k+(9/8)^n2^k} (M^n\psi)^p\\
&\ge \sum_{-\infty}^\infty \tilde f(2^{k+1})^p\int_{2^k+(9/8)^n2^{k-1}}^{2^k+(9/8)^n2^k} (M^n\mathbbm{1}_{(2^k,2^{k+1}]})^p\\
&\ge \sum_{-\infty}^\infty  \tilde f(2^{k+1})^pC_p(9/8)^n2^k\\
&=C_p(9/8)^n\|\psi\|_p^p\\
&\ge \half C_p(9/8)^n\|\tilde f\|_p^p,
\eal
so by picking $n=n(p)$ sufficiently large, we get
\[
\|M^nf\|_p^p\ge \|M^n\psi\|_p^p\ge 2^{p+1}\|\tilde f\|_p^p\ge 2^p\|f\|_p^p,
\]
so
\begin{equation}\label{ineq:stronggrowth}
\|M^nf \|_p\ge 2\|f\|_p.
\end{equation}
Now suppose that $\|Mf-f\|_p<\tilde\epsilon\|f\|_p$ for some $\tilde\epsilon$ to be chosen later. From the subadditivity of the maximal operator, it follows that $\|M\phi_1-M\phi_2\|_p\le A_p\|\phi_1-\phi_2\|_p$, so
\[
\|M^nf-f\|_p\le \sum_{j=1}^n\|M^jf- M^{j-1}f\|_p\le\sum_{j=1}^nA_p^{j-1}\|Mf- f\|_p<\left(\tilde\epsilon\sum_{j=1}^nA_p^{j-1}\right)\|f\|_p
\]
which contradicts $\eqref{ineq:stronggrowth}$ for $\tilde\epsilon=\tilde\epsilon(p)$ sufficiently small. Thus $\|Mf-f\|_p\ge\tilde\epsilon\|f\|_p$, so
\[
\|Mf\|_p^p=\int (Mf)^p\ge\int f^p+(Mf-f)^p=\|f\|_p^p+\|Mf-f\|_p^p\ge\left(1+\tilde\epsilon^p\right)\|f\|_p^p,
\]
which proves the theorem.
\end{proof}

\section{Concluding Remarks}\label{sec:asymptote}
Take any compactly supported bounded function $f\geq 0$ which is not identically zero. One can show that 
\begin{align}\label{finalrr}
(9/8)^{1/p}\leq \liminf_{k \to \infty}\|M^{k}f\|^{1/k}_{L^{p}}  \leq    \limsup_{k \to \infty} \|M^{k}f\|^{1/k}_{L^{p}} \leq a_{p},
\end{align}
where
the number $a_{p}>1$ solves $M(|x|^{-1/p})=a_{p} |x|^{-1/p}$ (such an $a_p$ can be seen to exist by a calculation, or by scaling considerations). In other words, the growth of $\|M^{k}f\|_{p}$ is exponential which suggests that Theorem~\ref{cth1} is likely to be true for all $1<p<\infty$.  To show (\ref{finalrr}) let us first illustrate the upper bound. Consider the function $\tilde{f}(x):=f(Cx)/\|f\|_{\infty}$. For any fixed constant $C\neq 0$ one can easily see that 
$\limsup_{k \to \infty} \|M^{k}f\|^{1/k}_{L^{p}} = \limsup_{k \to \infty} \|M^{k}\tilde{f}\|^{1/k}_{L^{p}}$. Therefore without loss of generality we can assume that $f\leq 1$ and the support of $f$ is in $[-1,1]$. Next, take any $\delta\in (0,p-1)$, and consider 
\begin{align*}
h(x) = 
\begin{cases}
1 & |x| \leq 1,\\
|x|^{-1/(p-\delta)} & |x|>1.
\end{cases}
\end{align*}
Clearly $h \in L^{p}$, and $f\leq h$. Since $M(|x|^{-1/p})=a_{p} |x|^{-1/p}$ it follows that $M h(x) \leq a_{p-\delta} h(x)$ for all $x \in \mathbb{R}$.  Thus 
\begin{align*}
\limsup_{k \to \infty} \|M^{k} f \|_{p}^{1/k} \leq \limsup_{k \to \infty} \|M^{k} h \|_{p}^{1/k} \leq a_{p-\delta} \limsup_{k \to \infty} \|h \|_{p}^{1/k}  = a_{p-\delta}.
\end{align*}
Finally, taking $\delta \to 0$ gives the desired inequality. 

\vskip1cm

To prove the lower bound, we have already seen that the function $\bar g(x)=(1-\sqrt{x})\mathbbm{1}_{(0,1]}$ satisfies
\[
M^n\bar g(x)\ge \bar g\left((8/9)^nx\right),
\]
so we can obtain the growth $(9/8)^{n/p}$ for the function $\bar g(x)$. Now it remains to notice that for any $f \geq 0, f\in L^{p}$ not identically zero 
we can rescale and shift the function $\bar g$ so that $Mf(x)\geq A \bar g(Bx+C)$ for some constants $A>0$, $B,C\neq 0$. This finishes the proof of the claim. 

\section{Acknowledgments}
We are grateful to an anonymous referee for her/his helpful comments and suggestions.

\bibliographystyle{abbrv}
\bibliography{zbarskybib}

\begin{thebibliography}{1}

\bibitem{Grecia}
L.~Grafakos and S.~Montgomery-Smith.
\newblock Best constants for uncentred maximal functions.
\newblock {\em Bull. London Math. Soc.}, 29(1):60--64, 1997.

\bibitem{IvanisviliJayeNazarov2017}
P.~Ivanisvili, B.~Jaye, and F.~Nazarov.
\newblock Lower bounds for uncentered maximal functions in any dimension.
\newblock {\em Int. Math. Res. Not. IMRN}, (8):2464--2479, 2017.

\bibitem{Korry2001}
S.~Korry.
\newblock Fixed points of the {H}ardy-{L}ittlewood maximal operator.
\newblock {\em Collect. Math.}, 52(3):289–294, 2001.

\bibitem{Lerner2010}
A.~K. Lerner.
\newblock Some remarks on the {F}efferman-{S}tein inequality.
\newblock {\em J. Anal. Math.}, 112:329--349, 2010.

\bibitem{Melas2017}
A.~D. Melas and E.~N. Nikolidakis.
\newblock Local lower norm estimates for dyadic maximal operators and related
  {B}ellman functions.
\newblock {\em J. Geom. Anal.}, 27(3):1940--1950, 2017.

\end{thebibliography}
\end{document}